\newtheorem{theorem}{Theorem}[section]
\newtheorem{prop}[theorem]{Proposition}
\newtheorem{lemma}[theorem]{Lemma}
\newtheorem{cor}[theorem]{Corollary}
\theoremstyle{definition}
\newcommand{\acts}{{\curvearrowright}}
\begin{document}
\title{The complexity of the classification problem of continua}
\author{Cheng Chang}
\address{Department of Mathematics\\ University of North Texas\\ 1155 Union Circle \#311430\\  Denton, TX 76203}
\email{chengchang@my.unt.edu}
\author{Su Gao}
\address{Department of Mathematics\\ University of North Texas\\ 1155 Union Circle \#311430\\  Denton, TX 76203}
\email{sgao@unt.edu}

\date{\today}
\subjclass[2010]{Primary 03E15, 54F15; Secondary 54H05, 46E15}
\keywords{continuum, path-connected, compact metric space, Borel reducible, Borel bireducible, universal orbit equivalence relation}
\thanks{The second author acknowledges the US NSF grant DMS-1201290 for the support of his research.}
\maketitle \thispagestyle{empty}

\begin{abstract} We prove that the homeomorphism problem for connected compact metric spaces is Borel bireducible with a universal orbit equivalence relation induced by a Borel action of a Polish group.
\end{abstract}


\section{Introduction}

By a {\em continuum} we mean a connected compact metric space. To classify all continua up to homeomorphism is generally considered an inaccessible task.  Thus research on this classification problem has been concentrated on subclasses of continua that are in some sense manageable, e.g. homogeneous continua on the plane (c.f. \cite{HOT} \cite{Lewis}). 
However, invariant descriptive set theory (c.f. \cite{G09}) provides a framework in which classification problems in mathematics can be compared to each other, so that one may speak of the relative complexity of classification problems. In this theory each classification problem under consideration is identified as an equivalence relation on a standard Borel space, and central to the theory is the following notion of Borel reducibility which is used to compare the relative complexity of equivalence relations. 

Let $X, Y$ be standard Borel spaces and $E, F$ be equivalence relations on $X, Y$, respectively. We say that $E$ is {\em Borel reducible} to $F$, denoted $E\leq_B F$, if there is a Borel function $\varphi: X\to Y$ such that for all $x, y\in X$, $xEy\iff \varphi(x)F \varphi(y)$. $E$ is said to be {\em Borel bireducible} with $F$, denoted $E\sim_B F$, if both $E\leq_B F$ and $F\leq_B E$.

If $\mathcal{C}$ is a class of equivalence relations and $F\in \mathcal{C}$, we say that $F$ is {\em universal} (or {\em complete}) for $\mathcal{C}$ if for all $E\in \mathcal{C}$, we have $E\leq_B F$. A usual way to characterize the exact complexity of an equivalence relation $E$ is to show that $E$ is Borel bireducible with a universal equivalence relation for a class of equivalence relations, since then it essentially says that $E$ is the most complex object in that class. 

Classification problems in continuum theory have been studied in this descriptive set theoretic setting. In \cite{CDM} the authors considered, among several problems, the homeomorphism problem for dendrites. They showed that this problem, as an equivalence relation, is Borel bireducible with a universal equivalence relation for the class of all orbit equivalence relations induced by a Borel action of the infinite permutation group $S_\infty$.

In general, if $G$ is a Polish group, $X$ is a standard Borel space, and $G\acts X$ is a Borel action, then the {\em orbit equivalence relation} $E^X_G$ is defined by $xE^X_Gy\iff \exists g\in G\ g\cdot x=y$. By a theorem of Becker--Kechris (c.f. \cite{G09} Theorem 3.3.4), for any Polish group $G$, there is a universal equivalence relation in the class of all orbit equivalence relations induced by Borel actions of $G$. It follows from results of Mackey (c.f. \cite{G09} Theorem 3.5.3) and Uspenskij (c.f. \cite{G09} Theorem 2.5.2) that there is a universal equivalence relation in the class of all orbit equivalence relations induced by Borel actions of all Polish groups. We simply refer to such an equivalence relation as a {\em universal orbit equivalence relation}.

The first classification problem determined to be Borel bireducible with a universal orbit equivalence relation is the isometric classification problem for all Polish metric spaces \cite{C} \cite{GK}. Later several other important classification problems in mathematics were also determined to have this exact complexity. These include the isometric classification problem of all separable Banach spaces \cite{M}, the isomorphism problem of all separable (nuclear) $C^*$-algebras \cite{S}, and most recently the homeomorphism problem of all compact metric spaces \cite{Z}. 

The main result of the present paper is the following theorem.

\begin{theorem}\label{main} The homeomorphic classification problem of all continua is Borel bireducible with a universal orbit equivalence relation.
\end{theorem}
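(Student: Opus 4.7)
The plan is to establish the Borel bireducibility by proving the two reductions separately, using as a bridge Zielinski's theorem \cite{Z} that the homeomorphism relation on all compact metric spaces is Borel bireducible with a universal orbit equivalence relation.

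The ``upper bound'' direction, namely that the homeomorphism relation on continua is Borel reducible to a universal orbit equivalence relation, is essentially immediate. The collection of continua is a Borel subset of the hyperspace of compact subsets of the Hilbert cube $[0,1]^\omega$, since connectedness is a Borel condition on compact sets. Thus the homeomorphism relation on continua is simply the restriction of the homeomorphism relation on $[0,1]^\omega$-subsets to a Borel subspace; the inclusion map is itself a Borel reduction, and composing with the reduction supplied by Zielinski yields the desired reduction to a universal orbit equivalence relation.

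For the ``lower bound'' direction, invoking Zielinski once more, it suffices to construct a Borel map $\Phi$ sending an arbitrary compact metric space $K$ to a continuum $\Phi(K)$, in such a way that $K_1\cong K_2$ if and only if $\Phi(K_1)\cong \Phi(K_2)$. Such a $\Phi$ is a Borel reduction of the homeomorphism relation on compact metric spaces to the homeomorphism relation on continua, and composing with Zielinski's completeness yields a reduction from a universal orbit equivalence relation to the homeomorphism relation on continua.

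The main obstacle is the design of $\Phi$, where $K$ must be topologically recoverable from $\Phi(K)$ up to homeomorphism. Naive choices are insufficient: the cone $(K\times[0,1])/(K\times\{1\})$ can destroy topological information, as the cone over the Hilbert cube is again the Hilbert cube and so cones cannot distinguish all compact metric spaces. A successful construction must attach to $K$ some topologically rigid ``marker,'' so that $K$ becomes intrinsically definable within $\Phi(K)$ as the set of points of a specific local topological type (for example, as the closure of a certain set of endpoints of attached arcs, or as the set of points failing a prescribed local property). Once such a $\Phi$ is fixed, one must verify that (i) $\Phi$ is Borel, (ii) $\Phi(K)$ is always a continuum, (iii) $K_1\cong K_2$ implies $\Phi(K_1)\cong\Phi(K_2)$, which follows readily by extending the homeomorphism across the attached markers, and (iv) conversely any homeomorphism $\Phi(K_1)\to\Phi(K_2)$ preserves the marker structure and so restricts to a homeomorphism $K_1\to K_2$. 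Step (iv) is the technical heart of the argument and will require a careful local analysis of the constructed continuum.
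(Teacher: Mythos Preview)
Your proposal correctly identifies the overall strategy---upper bound by inclusion, lower bound by a coding construction---but it is a plan, not a proof. The essential step, the construction of the map $\Phi$, is never actually carried out: you describe what properties $\Phi$ should have and what must be verified, but you do not specify $\Phi$ or perform the ``careful local analysis'' you yourself flag as the technical heart. As written, nothing here can be checked.

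Beyond the missing construction, your chosen starting point makes the problem harder than it needs to be. You propose to encode an \emph{arbitrary} compact metric space $K$ as a continuum $\Phi(K)$; as you note, this is delicate because $K$ may have isolated points, many components, etc., and naive attachments like cones can collapse information. The paper avoids this difficulty by using not Zielinski's final theorem but his intermediate result: the relation $\cong_{(1,1)}$ on triples $(X,B,A)$ with $X$ a \emph{fixed} path-connected continuum \emph{without cut-points} is already universal. Starting from such triples, the coding continuum $J(X,B,A)$ is built by first forming Zielinski's space $I(X,B,A)$ (which has isolated points) and then attaching a fan over the closure of those isolated points. The hypothesis that $X$ has no cut-points is exactly what makes step (iv) go through: the attached fan is full of cut-points, so the original $X$ is recoverable inside $J(X,B,A)$ by a cut-point/noncut-point analysis, and an induction on Cantor--Bendixson rank peels off the added structure. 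Without an analogous leverage point, your direct $K\mapsto\Phi(K)$ scheme has no obvious mechanism for recovering $K$, and you have not supplied one.
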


The continua we construct will in fact be path-connected. We do not know, however, if our construction can be improved to produce locally connected spaces or homogeneous spaces.

The proof of our main theorem relies on the construction of Zielinski in \cite{Z} for the universality of the homeomorphism problem of all compact metric spaces. We will recall more details of \cite{Z} as we proceed with our proof. Here we just mention that Zielinski's spaces all have Cantor-Bendixson rank 2, and in particular contain infinitely many isolated points.

Recall that the Banach--Stone theorem (c.f. \cite{Se} 7.8.4) implies that for any compact metric spaces $X$ and $Y$, $X$ is homeomorphic to $Y$ if and only if the Banach spaces $C(X,\mathbb{R})$ and $C(Y, \mathbb{R})$ are isometrically isomorphic. Thus Zielinski's theorem \cite{Z} gives an alternative proof of Melleray's theorem on the universality of the isometric problem of separable Banach spaces \cite{M}. 

Furthermore, it is also true that for any compact metric spaces $X$ and $Y$, $X$ is homeomorphic to $Y$ if and only if $C(X, \mathbb{R})$ and $C(Y,\mathbb{R})$ are isometric as Polish metric spaces. Thus it might appear that Zielinski's theorem also gives an alternative proof of the universality of the isometric problem of Polish metric spaces \cite{C} \cite{GK}. However, a close scrutiny unravels that Zielinski's proof depends on Sabok's result on the universality of the affine homeomorphism of Choquet simplices \cite{S}, which in turn relies on the results of \cite{C} and \cite{GK}. Thus this chain of proofs only establishes the logical equivalence of these theorems, and does not give an alternative proof of the earliest result. 

By a theorem of Eilenberg \cite{E}, when $X$ is a compact metric space, $X$ is connected if and only if the Banach space $C(X,\mathbb{R})$ is indecomposable as the direct sum of two subspaces. In contrast, he also showed that $X$ contains an isolated point if and only if $C(X,\mathbb{R})$ admits $\mathbb{R}$ as a direct summand. Thus Zielinski's spaces give rise to Banach spaces with one-dimensional direct summands, whereas spaces constructed from our proof give rise to spaces indecomposable in the sense of Eilenberg. Combining these results, we get the following corollary to our main theorem.

\begin{cor} The isometric classification problem of all separable Banach spaces which are indecomposable as the direct sum of two subspaces is Borel bireducible with a universal orbit equivalence relation.
\end{cor}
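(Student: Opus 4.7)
The plan is to obtain the corollary by sandwiching the isometric classification of indecomposable separable Banach spaces between two copies of a universal orbit equivalence relation, using the main theorem as the lower bound and the known universality result of Melleray as the upper bound.

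For the upper bound, note that the class of separable Banach spaces, with isometric isomorphism, can be coded as a standard Borel space with an orbit equivalence relation induced by a Polish group action (for instance, closed linear subspaces of $C(2^\mathbb{N},\mathbb{R})$ with the natural action of the isometry group of a suitably universal Banach space). The subclass of indecomposable separable Banach spaces is Borel in this space (indecomposability can be phrased as a Borel condition on the closed subspace code), so isometric classification restricted to this subclass is itself an orbit equivalence relation induced by a Borel action of a Polish group. Hence it is Borel reducible to a universal orbit equivalence relation by Becker--Kechris / Mackey--Uspenskij as recalled in the introduction.

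For the lower bound, I would use the Borel map $X \mapsto C(X,\mathbb{R})$, viewing a continuum $X$ as a closed subset of the Hilbert cube and sending it to the Banach space of continuous real-valued functions on it (coded as a closed linear subspace of some standard universal separable Banach space). A routine verification shows this map is Borel. By the Banach--Stone theorem, $X$ is homeomorphic to $Y$ if and only if $C(X,\mathbb{R})$ and $C(Y,\mathbb{R})$ are linearly isometric, so this is a Borel reduction from the homeomorphism relation on continua to the isometric isomorphism relation on separable Banach spaces. By Eilenberg's theorem, since each $X$ is connected, each $C(X,\mathbb{R})$ is indecomposable as a direct sum of two nontrivial subspaces, so the reduction actually lands in the class of indecomposable separable Banach spaces. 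Combined with Theorem \ref{main}, which tells us that a universal orbit equivalence relation is Borel reducible to the homeomorphism relation on continua, we obtain the desired lower bound.

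The main obstacle is a purely bookkeeping one, namely fixing a standard Borel space that parameterizes separable Banach spaces up to linear isometry in such a way that (i) the indecomposable ones form a Borel subset, (ii) the map $X \mapsto C(X,\mathbb{R})$ from the hyperspace of subcontinua of the Hilbert cube into this parameter space is Borel, and (iii) the resulting equivalence relation is genuinely the orbit equivalence relation of a Borel Polish group action. Once such a coding is in place (which is standard, cf. the set-up of \cite{M}), the three steps above combine to give $E_G^X \leq_B {\sim_{\mathrm{homeo}}}\!\upharpoonright \text{continua} \leq_B {\simeq_{\mathrm{isom}}}\!\upharpoonright \text{indecomposable Banach} \leq_B E_G^X$ for a universal orbit equivalence relation $E_G^X$, proving the corollary.
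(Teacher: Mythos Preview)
Your proposal is correct and follows the same route the paper takes: the lower bound comes from composing Theorem~\ref{main} with the Borel map $X\mapsto C(X,\mathbb{R})$, using Banach--Stone for the reduction and Eilenberg's theorem to guarantee the image lands among indecomposable spaces, while the upper bound comes from the known fact that isometry of separable Banach spaces is Borel reducible to an orbit equivalence relation. The paper leaves this corollary unproved beyond the sentence ``Combining these results, we get the following corollary,'' so you have simply spelled out that combination; your flagged bookkeeping issue about the Borel complexity of the indecomposable class is a legitimate detail the paper does not address either.
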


We note that this notion of indecomposability is different from the prevailing use of the same terminology in the current literature (where indecomposable often means that when the Banach space is written as a direct sum, one of the summand must be finite-dimensional). We do not know if, for instance, the isometric classification problem of all hereditarily indecomposable separable Banach spaces is still a universal orbit equivalence relation. 

Since our spaces all contain copies of the unit interval, it follows from the Banach--Mazur theorem (c.f. \cite{Se} 8.7.2) that the Banach spaces $C(X,\mathbb{R})$ arising from our proof are all universal, i.e., every real, separable Banach space is isometrically isomorphic to a closed subspace of each of the space $C(X,\mathbb{R})$ we construct.

\begin{cor} The isometric classification problem of all universal separable Banach spaces is Borel bireducible with a universal orbit equivalence relation.
\end{cor}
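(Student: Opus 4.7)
The plan is to establish Borel bireducibility by two separate reductions, sandwiching the isometric classification of universal separable Banach spaces between two equivalence relations already known to be Borel bireducible with a universal orbit equivalence relation. The upper bound will come essentially for free from Melleray's theorem \cite{M} on the universality of the isometric classification of all separable Banach spaces, while the lower bound will come from Theorem \ref{main} combined with the Banach--Stone and Banach--Mazur theorems.

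For the upper bound, I would fix a standard Borel parametrization of all separable Banach spaces (for instance by closed linear subspaces of $C([0,1],\mathbb{R})$, or equivalently by pseudo-norms on a fixed countable set). Melleray's theorem provides a Borel reduction from the isometric classification on this space to a universal orbit equivalence relation; restricting this reduction to the subclass of universal separable Banach spaces yields what is required. For the lower bound, I would compose the Borel reduction from Theorem \ref{main} with the assignment $X\mapsto C(X,\mathbb{R})$. As noted in the paragraph preceding the corollary, every continuum $X$ produced by the construction is path-connected and therefore contains a copy of the unit interval, so by Banach--Mazur each $C(X,\mathbb{R})$ is a universal separable Banach space. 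By Banach--Stone, $X$ is homeomorphic to $Y$ if and only if $C(X,\mathbb{R})$ and $C(Y,\mathbb{R})$ are isometrically isomorphic as Banach spaces, and this in turn is equivalent (for $X,Y$ compact metric) to their being isometric as Polish metric spaces. The composition is the desired reduction.

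The main technical point to verify is that the assignment $X\mapsto C(X,\mathbb{R})$ is Borel as a map from a standard Borel space of continua (say a Borel subset of the hyperspace of compact subsets of the Hilbert cube) to whichever standard Borel space of separable Banach spaces one has fixed, and that ``being universal'' cuts out a Borel subset there. Both are routine in the usual parametrizations: the sup-norm on $C(X,\mathbb{R})$ is determined in a Borel way by a countable dense set of functions evaluated at a countable dense subset of $X$, and universality can be characterized by the existence of an isometric embedding of a fixed universal space such as $C([0,1],\mathbb{R})$, which is an analytic (and in fact Borel, in the usual codings) condition. I expect no genuine obstacle beyond bookkeeping.
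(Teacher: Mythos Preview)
Your proposal is correct and follows essentially the same approach as the paper: the corollary is stated in the introduction without a separate proof, justified by exactly the surrounding discussion you reproduce---Theorem~\ref{main} together with the Banach--Stone and Banach--Mazur theorems for the lower bound, and Melleray's theorem for the upper bound. The paper records the Borelness of $X\mapsto C(X,\mathbb{R})$ in Section~2 and, like you, treats the remaining bookkeeping (in particular that the universal spaces form a reasonable parameter space) as routine.
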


\section{Preliminaries}

We first explain how to view various classification problems we consider in this paper as equivalence relations on standard Borel spaces, and why they are Borel reducible to orbit equivalence relations induced by Borel actions of Polish groups. Our standard references for notation and terminology are \cite{K} and \cite{G09}. 

Recall that a {\em Polish space} is a separable, completely metrizable topological space. A {\em standard Borel space} is a pair $(X, \mathfrak{B})$, where $X$ is a set and $\mathfrak{B}$ is a $\sigma$-algebra of subsets of $X$, such that $\mathfrak{B}$ is the $\sigma$-algebra generated by some Polish topology on $X$. If $(X, \mathfrak{B})$ is a standard Borel space we refer to elements of $\mathfrak{B}$ as {\em Borel sets}. As usual, if $(X,\mathfrak{B})$ is a standard Borel space and the collection $\mathfrak{B}$ is clear from the context, we will say that $X$ is a standard Borel space. It is natural to view any Polish space as a standard Borel space.

If $X$ and $Y$ are standard Borel spaces, a function $f: X\to Y$ is {\em Borel (measurable)} if for any Borel $B\subseteq Y$, $f^{-1}(B)\subseteq X$ is Borel. 

Given any Polish space $X$, the {\em Effros Borel space} $F(X)$ is the space of all closed subsets of $X$ with the $\sigma$-algebra generated by the all sets of the form
\[
\{F\in F(X): F\cap U\neq \emptyset\},
\]
where $U\subseteq X$ is open. It is a standard Borel space.

A closely related construction is the hyperspace of compact sets. Given any Polish space $X$, we let $K(X)$ denote the space of all compact subsets of $X$, and endow it with the {\em Vietoris topology}, which is the topology generated by sets of the form
$$ \{K\in K(X): K\cap U\neq \emptyset\} $$
and those of the form
$$ \{K\in K(X): K\subseteq U\} $$
where $U\subseteq X$ is open. $K(X)$ is a Polish space.

Throughout this paper we let $\mathcal{Q}$ denote the Hilbert cube $[0,1]^\mathbb{N}$. Since every compact metric space is homeomorphic to a (compact) subspace of $\mathcal{Q}$, we view $K(\mathcal{Q})$ as the Polish space of all compact metric spaces. Let 
$$ \mathcal{C}=\{K\in K(\mathcal{Q})\,:\, \mbox{$K$ is connected}\}. $$
We regard $\mathcal{C}$ as the space of all continua. Note that $\mathcal{C}$ is a closed subspace of $K(\mathcal{Q})$, since for any $K\in K(\mathcal{Q})$, 
$$ K\in\mathcal{C} \iff \forall \mbox{ open } U, V\ (U\cap V=\emptyset\Rightarrow U\cap K=\emptyset \mbox{ or } V\cap K=\emptyset\mbox{ or } K\not\subseteq U\cup V). $$
Thus $\mathcal{C}$ is itself a Polish space.

We only consider real Banach spaces in this paper. The Banach--Mazur theorem (c.f. \cite{Se} 8.7.2) states that $C([0,1],\mathbb{R})$ is a universal separable Banach space. Let
$$ \mathcal{B}=\{F\in F(C([0,1],\mathbb{R}))\,:\, \mbox{$F$ is a linear subspace of $C([0,1],\mathbb{R})$}\}. $$
We regard $\mathcal{B}$ as the space of all separable Banach spaces. It turns out that $\mathcal{B}$ is a Borel subset of $F(C([0,1],\mathbb{R}))$ (c.f. \cite{GJS} for a full discussion), and hence $\mathcal{B}$ is a standard Borel space.

Using the proof of the Banach--Mazur theorem, it can be shown that $X\mapsto C(X,\mathbb{R})$ is a Borel map from $K(\mathcal{Q})$ into $\mathcal{B}$.

Let $H(\mathcal{Q})$ denote the group of all autohomeomorphisms of $\mathcal{Q}$ onto itself, with the composition of maps as the multiplication operation. $H(\mathcal{Q})$ is a Polish group, i.e., a topological group with the underlying topology being Polish. The natural action of $H(\mathcal{Q})$ on $K(\mathcal{Q})$ is continuous. We denote its orbit equivalence relation by $E(\mathcal{Q})$. As noted in \cite{Z}, there is a Borel reduction from $K(\mathcal{Q})$ to $K(\mathcal{Q})$ that reduces the homeomorphism relation to $E(\mathcal{Q})$. This implies, in particular, that the homeomorphism relation on $\mathcal{C}\subseteq K(\mathcal{Q})$ is Borel reducible to an orbit equivalence relation.

By Zielinski's theorem, $E(\mathcal{Q})$ is a universal orbit equivalence relation. 

The isometric classification problem of separable Banach spaces corresponds to the isometry relation on $\mathcal{B}$. This is also known to be Borel reducible to an orbit equivalence relation. Since the discussion of full details of this reduction will need the construction of the universal Urysohn space and its isometry group, which are irrelevant to the rest of this paper, we refer the interested reader to \cite{M} and \cite{FLR} for more details.

For our constructions and proofs of continua we will need the following basic notation and terminology in continuum theory. For unexplained notation and terminology our standard reference is \cite{W}. 

Let $X$ be a continuum. An element $x\in X$ is a {\em cut-point} of $X$ if $X-\{x\}$ is disconnected. If $x$ is not a cut-point of $X$, it is a \emph{noncut-point} of $X$. Cut-points are preserved by homeomorphisms, but not necessarily by continuous maps.

If $X$ is a topological space and $x, y\in X$, a {\em path} from $x$ to $y$ is a continuous function $f: [0,1]\to X$ such that $f(0)=x$ and $f(1)=y$. When there is no danger of confusion, we also refer to the graph of such an $f$ as a path. Define $x\sim y$ iff there is a path from $x$ to $y$, for any $x, y\in X$. Then $\sim$ is an equivalence relation, and its equivalence classes are the {\em path-components} of $X$. $X$ is {\em path-connected} if it has only one path-component, or equivalently, if there is a path from $x$ to $y$ for any $x, y\in X$. 



\section{Coding spaces with isolated points}

Our proof of Theorem~\ref{main} will be built on some constructions by Zielinski \cite{Z} for the universality of the homeomorphism problem of compact metric spaces. We first review some details of \cite{Z} necessary for our proof.

Consider the space whose members are triples of compact metric spaces $(X, B, A)$ such that $X$ is perfect and $A\subseteq B\subseteq X$. Let $\cong_{(1,1)}$ denote the equivalence relation where $(X, B, A)\cong_{(1,1)} (Y, D, C)$ if there exists a homeomorphism $f: X\to Y$ such that $f[A]=C$ and $f[B]=D$. 

\begin{theorem}[Zielinski\cite{Z}] \label{thm:Z}The equivalence relation $\cong_{(1,1)}$ restricted on the space of triples $(X, B, A)$, where $X$ is path-connected and has no cut-points, is Borel bireducible with a universal orbit equivalence relation.
\end{theorem}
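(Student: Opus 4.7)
The statement is a Borel bireducibility claim, so I would prove it in two directions. The easier direction, that the restricted $\cong_{(1,1)}$ is Borel reducible to a universal orbit equivalence relation, amounts to realizing $\cong_{(1,1)}$ as an orbit equivalence relation of a Borel action of a Polish group. I would regard the space of triples $(X,B,A)$ with $X\subseteq\mathcal{Q}$ perfect, path-connected, without cut-points, and $A\subseteq B\subseteq X$, as a Borel subset of $K(\mathcal{Q})^3$: perfectness and the inclusions are closed conditions, connectedness is closed in the Vietoris topology as shown in the preliminaries, and path-connectedness and the absence of cut-points are Borel by standard arguments. I would then let $H(\mathcal{Q})$ act on $K(\mathcal{Q})^3$ diagonally by $f\cdot(X,B,A)=(f[X],f[B],f[A])$, and apply the standard $Z$-set trick: precompose with the Borel map $(X,B,A)\mapsto(X\times\{0\},B\times\{0\},A\times\{0\})$ inside $\mathcal{Q}\times[0,1]\cong\mathcal{Q}$, so that each $X$ becomes a $Z$-set. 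Anderson's unknotting theorem then guarantees that every homeomorphism $f:X\to Y$ between two $Z$-sets in $\mathcal{Q}$ extends to an element of $H(\mathcal{Q})$, so $\cong_{(1,1)}$ matches the orbit equivalence relation of this Borel action, which is Borel reducible to a universal orbit equivalence relation by Becker--Kechris.

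The harder direction, universality, requires a Borel reduction from a known universal orbit equivalence relation. Since the introduction flags that Zielinski's route passes through Sabok's universal affine homeomorphism relation on metrizable Choquet simplices, I would take as input such a simplex $K$ and output a triple $(X_K,B_K,A_K)$ with $X_K$ path-connected, perfect, and without cut-points, so that affine homeomorphism of simplices corresponds exactly to $\cong_{(1,1)}$-equivalence of the triples. A natural first attempt is $X_K=\mathcal{Q}$ (which is already compact, path-connected, perfect, and without cut-points), with a $Z$-set copy of $K$ placed inside as $B_K$, together with an auxiliary subset $A_K$ recording enough structural information to detect the embedding. Via $Z$-set unknotting, the homeomorphism type of $(X_K,B_K)$ alone already determines $K$ up to homeomorphism; the point of $A_K$ is to upgrade this to affine data, by marking a rigidly chosen subset of the extreme boundary of $K$ and decorating it with distinguishing subcontinua that code the affine structure.

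The main obstacle, and the technical heart of the proof, is arranging the coding $(A_K,B_K)$ so that any abstract homeomorphism between two image triples must transport the coded invariants of $K$ to those of $L$ faithfully, rather than being absorbed by the extreme flexibility of $H(\mathcal{Q})$. The nested structure $A\subseteq B\subseteq X$ gives two layers in which to record information --- reflected in the notation $\cong_{(1,1)}$ --- and the coding must exploit both layers simultaneously, with $B_K$ isolating the coded points from the rest of $X_K$ and $A_K$ pinning down their relative positions inside $B_K$. Designing such a rigid coding, verifying that $X_K$ remains path-connected and free of cut-points throughout, and checking Borelness of the assignment $K\mapsto(X_K,B_K,A_K)$ are the main pieces of work; everything else follows by transferring the universality of the source equivalence relation through the reduction.
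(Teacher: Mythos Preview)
This theorem is not proved in the present paper; it is quoted from Zielinski \cite{Z}, and the paper's only addition beyond the citation is the remark immediately following the statement that Zielinski in fact uses a single fixed ambient space
\[
X=\{(x,y)\in\mathcal{Q}^2 : \forall m\neq n\ (y_m=0 \mbox{ or } y_n=0)\},
\]
which is path-connected and without cut-points. There is therefore no in-paper proof to compare your proposal against line by line.

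That said, two comments. Your easier direction (the $Z$-set unknotting argument reducing $\cong_{(1,1)}$ to an $H(\mathcal{Q})$-orbit relation) is standard and correct. On the hard direction, your outline diverges from Zielinski's actual construction: you propose $X_K=\mathcal{Q}$ with $B_K$ a $Z$-set copy of the simplex $K$ and $A_K$ some rigidifying decoration of its extreme boundary, whereas Zielinski works in the fan-of-Hilbert-cubes space displayed above, and his $A\subseteq B$ arise from an intermediate coding of continuous maps rather than directly from the extreme boundary. More importantly, your sketch openly defers the genuine content---how to design $A_K$ so that an arbitrary homeomorphism of triples is forced to transport the affine structure---and ``marking a rigidly chosen subset of the extreme boundary with distinguishing subcontinua'' inside $\mathcal{Q}$ is not yet a mechanism; given the homogeneity of $\mathcal{Q}$, it is not evident that such a decoration can survive an arbitrary homeomorphism without the specific combinatorics Zielinski supplies. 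So what you have written is a plausible plan for the reduction, not a proof, and since the paper itself simply invokes \cite{Z}, the honest comparison is that both the paper and your proposal point to Zielinski for the substance.
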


In fact, Zielinski showed the much stronger result in which the spaces $X$ in the triples are all the same:
$$ X=\{(x,y)\in\mathcal{Q}^2\,:\, \forall m\neq n\ (y_m=0\mbox{ or } y_n=0)\}. $$
This space is path-connected and has no cut-points.

In the final step of his proof, Zielinski coded the $\cong_{(1,1)}$-type of a triple $(X, B, A)$ by the homeomorphism type of a compact metric space which he denoted by $I_2(X, B, A)$ (we will give the definition below). As we noted before, $I_2(X, B, A)$ has Cantor-Bendixson rank 2, and in particular contains isolated points. In the next section we will modify the construction to get a continuum.

In the rest of this section, we will prove some further results about coding homeomorphism types by spaces with isolated points.

Let $X$ be a compact metric space and $A\subseteq X$ be a closed subspace containing all isolated points of $X$. Let $\mathcal{D}_{X,A}$ be the collection of $D\subseteq X\times (0,1]$ which is a nonempty set of isolated points so that $\overline{D}-D=A\times\{0\}$. If $D\in \mathcal{D}_{X,A}$ and $A\neq\emptyset$, then the set $D$ is necessarily countably infinite. For any $D\in\mathcal{D}_{X,A}$ let $I(X, A; D)=X\times\{0\}\cup D$. Being a closed subspace of $X\times [0,1]$, $I(X, A; D)$ is still a compact metric space.

Zielinski \cite{Z} showed that $\mathcal{D}_{X, A}$ is nonempty for any $X, A$ as above. In fact, let $(q_n)_{n\in\mathbb{N}}$ be any enumeration of a countable dense subset of $A$ with each point repeatedly enumerated infinitely many times, the set $D_A=\{(q_n, 1/n)\,:\, n\geq 1\}$ is an element of $\mathcal{D}_{X, A}$. He also showed that the homeomorphism type of $I(X, A; D_A)$ does not depend on either the choice of the countable dense set or its enumeration. Here we note that $I(X, A; D)$ is unique up to homeomorphism in a slightly broader sense. 

\begin{prop}\label{prop:unique} Let $X$ be a compact metric space and $A\subseteq X$ be a nonempty closed subspace containing all isolated points of $X$. Then for any $D, D'\in\mathcal{D}_{X,A}$, $I(X, A; D)$ and $I(X, A; D')$ are homeomorphic.
\end{prop}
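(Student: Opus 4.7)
The plan is to construct directly a homeomorphism $\Pi: I(X, A; D) \to I(X, A; D')$ that fixes $X \times \{0\}$ pointwise and is determined on $D$ by an appropriate bijection $\pi: D \to D'$. Since $A$ contains all isolated points of $X$ and every point of $A \times \{0\}$ lies in $\overline{D} \setminus D$ (resp.\ $\overline{D'} \setminus D'$), the isolated points of $I(X, A; D)$ are exactly $D$ and its non-isolated part is $X \times \{0\}$. Any homeomorphism of the desired form is therefore forced to preserve isolatedness, and the only delicate continuity check will be at points of $A \times \{0\}$.

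The first step is a compactness observation: for every $\epsilon > 0$, the set $\{d \in D : d(d, A \times \{0\}) \geq \epsilon\}$ is finite. Otherwise, by compactness of $X \times [0,1]$, this set would admit an accumulation point $p$, which would then lie in $\overline{D} \setminus D = A \times \{0\}$ yet satisfy $d(p, A \times \{0\}) \geq \epsilon$, a contradiction. The same applies to $D'$.

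The second step is a back-and-forth construction of $\pi$. Enumerate $D = (d_n)_n$ and $D' = (d'_n)_n$. At stage $n$, given a finite partial bijection between $E_n \subseteq D$ and $E'_n \subseteq D'$, perform a forward step: if $d_n \notin E_n$, choose $d' \in D' \setminus E'_n$ with $d(d', d_n) < d(d_n, A \times \{0\}) + 1/n$. Such $d'$ exists because $D'$ accumulates at every point of $A \times \{0\}$, in particular at a nearest one to $d_n$, while $E'_n$ is finite. Then perform the symmetric backward step to process $d'_n$. Both enumerations are exhausted, yielding a bijection $\pi: D \to D'$. The key property to verify is that $\{d \in D : d(\pi(d), d) \geq \epsilon\}$ is finite for every $\epsilon > 0$. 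For $d_n$ assigned at the forward step of stage $n$, the bound $d(\pi(d_n), d_n) < d(d_n, A \times \{0\}) + 1/n$ combined with the compactness observation (which forces $d(d_n, A \times \{0\}) \to 0$) suffices. For $d$ assigned at the backward step of stage $m$ as the partner of $d'_m$, we get $d(\pi(d), d) = d(d, d'_m) < d(d'_m, A \times \{0\}) + 1/m$, which exceeds $\epsilon$ for only finitely many $m$ by the analogous compactness observation for $D'$.

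The final step is to verify continuity of $\Pi$. At a point $(x, 0) \notin A \times \{0\}$, there is a neighborhood disjoint from $D$ (since $D$ accumulates only on $A \times \{0\}$), so continuity is immediate. At $(a, 0) \in A \times \{0\}$, given $\epsilon > 0$, let $B = \{d \in D : d(\pi(d), d) \geq \epsilon/2\}$, which is finite by the key property, and take a ball $V$ around $(a, 0)$ of radius $\epsilon/2$ disjoint from $B$. Any $z \in V \cap I(X, A; D)$ either lies in $X \times \{0\}$ (so $\Pi(z) = z \in V$) or lies in $D \setminus B$ (so $d(\Pi(z), (a,0)) \leq d(\Pi(z), z) + d(z, (a,0)) < \epsilon$). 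Continuity at isolated points is trivial. By symmetry $\Pi^{-1}$ is continuous, and $\Pi$ is the desired homeomorphism. The main obstacle is controlling the backward-step assignments, whose targets in $D$ are not indexed by the stage number; the resolution is precisely the symmetric compactness observation for $D'$, which provides the needed decay of $d(d'_m, A \times \{0\})$ as $m \to \infty$.
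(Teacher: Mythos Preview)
Your proof is correct and follows essentially the same strategy as the paper's: a back-and-forth bijection $D\to D'$ extended by the identity on $X\times\{0\}$, with continuity at points of $A\times\{0\}$ verified by showing the bijection moves only finitely many points by more than any fixed $\epsilon$. Your packaging via a single metric on $X\times[0,1]$ and the explicit ``key property'' is somewhat cleaner than the paper's coordinate-wise estimates, but the underlying argument is the same.
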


\begin{proof} Fix $D, D'\in \mathcal{D}_{X,A}$. It suffices to define a bijection $g:D\to D'$ such that the map $f: I(X, A; D)\to I(X, A; D')$ defined by 
$$ f(p)=\left\{\begin{array}{ll} p, &\mbox{ if $p\in X\times\{0\}$} \\ g(p), & \mbox{ if $p\in D$} \end{array}\right. $$
is continuous. Then $f$ is a homeomorphism. We define $g:D\rightarrow D'$ by a back-and-forth construction. 

Let $\pi_1: X\times [0,1]\to X$ and $\pi_2:X\times[0,1]\rightarrow [0,1]$ be the projection maps. Let $(d_i)_{i\in\mathbb{N}}$ be a nonrepetitive enumeration of elements of $D$ and $(d_i')_{i\in\mathbb{N}}$ be a nonrepetitive enumeration of elements of $D'$. Fix a compatible metric $\rho$ on $X$. For each $i\in\mathbb{N}$ let $$\delta_i=\rho(\pi_1(d_i), A)=\inf\{\rho(\pi_1(d_i),a)\,:\, a\in A\}$$ and similarly let $\delta_i'=\rho(\pi_1(d_i'),A)$. 

At the beginning of stage $2i$, if $g(d_i)$ has been defined then we do nothing. Suppose $g(d_i)$ has not been defined so far. We define $g(d_i)=d_j'$ where $j\in\mathbb{N}$ is the least such that $g^{-1}(d_j')$ has not been defined, 
$\rho(\pi_1(d_i),\pi_1(d_j'))<\delta_i+2^{-i}$, and $\pi_2(d_j')<2^{-i}$. 
To see that such $d_j'$ exists, fix an $a\in A$ such that $\rho(\pi_1(d_i),a)=\delta_i$. By our assumption $A\times\{0\}=\overline{D'}-D'$, there are infinitely many elements $d\in D'$ such that $\rho(\pi_1(d),a)<2^{-i}$ and $\pi_2(d)<2^{-i}$. They all satisfy $\rho(\pi_1(d_i),\pi_1(d))<\delta_i+2^{-i}$, and not all of them are in the range of $g$ at this time. This finishes the definition of $g$ at stage $2i$.

At stage $2i+1$ we define $g^{-1}(d_i')$ with a similar construction.

Since $g(d_i)$ and $g^{-1}(d_i')$ are defined before stage $2i+2$, it is guaranteed that $g$ is a bijection between $D$ and $D'$.

We verify that $f$ is continuous. The continuity of $f$ at a point $p$ is obvious when $p\in (X-A)\times\{0\}$ or $p\in D$.  Assume $p=(a,0)$ where $a\in A$. Let $U$ be an open set in $I(X, A; D')$ containing $f(p)=(a,0)$. Let $n\in\mathbb{N}$ be such that for all $q\in I(X,A; D')$, $q\in U$ whenever $\rho(\pi_1(q),a)<2^{-n}$ and $\pi_2(q)<2^{-n}$. We define an open set $V$ in $I(X, A; D)$ containing $p$ such that $f[V]\subseteq U$.

Consider the set
$$ S=\{q\in D'\,:\, \rho(\pi_1(q), A)\geq 2^{-n-2} \mbox{ or } \pi_2(q)\geq 2^{-n}\}. $$
$S$ is finite since $\overline{D'}-D'=A\times\{0\}$. Now consider
$$ F=g^{-1}(S)\cup\{d_i, g^{-1}(d_i')\,: \, i\leq n+2\}. $$
Again, $F$ is finite. Let $m\geq n+2$ be large enough such that for any $q\in F$,  either $\rho(\pi_1(q),a)\geq 2^{-m}$ or $\pi_2(q)\geq 2^{-m}$. Define $V$ by
$$ q\in V\iff \rho(\pi_1(q),a)<2^{-m} \mbox{ and } \pi_2(q)<2^{-m}. $$
Then $V$ is open, $V\cap F=\emptyset$, and in particular $g[V]\cap S=\emptyset$. We claim that $f[V]\subseteq U$. For this it suffices to check that for any $d_i\in V\cap D$, $g(d_i)\in U$. Fix $d_i\in V$ and suppose $g(d_i)=d_j'$. We have $i,j>n+2$. Suppose first that $g(d_i)$ was first defined at an even stage, so it is defined at stage $2i$. We have $\delta_i\leq \rho(\pi_1(d_i),a)<2^{-m}\leq 2^{-n-2}$, and so $\rho(\pi_1(d_i),\pi_1(d_j'))<\delta_i+2^{-i}< 2^{-n-1}$, from which it follows that $\rho(\pi_1(d_j'), a)<2^{-m}+2^{-n-1}<2^{-n}$. Also $\pi_2(d_j')<2^{-i}<2^{-n}$ by the definition of $d_j'=g(d_i)$. Thus $d_j'\in U$ as required.

Finally suppose that $g(d_i)$ was first defined at an odd stage, that is, $j<i$ and $g^{-1}(d_j')=d_i$ was defined at stage $2j+1$. Still note that $j>n+2$ by the definition of $F$ and $V$. Note that $d_j'\not\in S$, so $\delta_j'=\rho(\pi_1(d_j'),A)<2^{-n-2}$. From our construction, $\rho(\pi_1(d_j'), \pi_1(d_i))<\delta_j'+2^{-j}<2^{-n-1}$. Since $\rho(\pi_1(d_i),a)<2^{-m}<2^{-n-1}$, we have that
$\rho(\pi_1(d_j'), a)<2^{-n}$. From $d_j'\not\in S$ again we get $\pi_2(d_j')<2^{-n}$. This again shows that $d_j'\in U$ as required.
\end{proof}

In view of Proposition~\ref{prop:unique}, we simply write $I(X,A)$ for any $I(X,A; D)$ for $D\in \mathcal{D}_{X,A}$. If $A$ is empty, we let $I(X, A)=I(X, A; D)$ where $D$ is a singleton. Then the proposition will formally hold.

From Proposition 1 of \cite{Z}, it now follows easily that $I(X,A)$ is a coding space for the homeomorphism type of pairs $(X, A)$ where $X$ is a compact metric space and $A\subseteq X$ is a closed subspace. We state this result without proof.

\begin{prop} Let $X, Y$ be compact metric spaces, and $A\subseteq X$ and $B\subseteq Y$ be closed subspaces containing all isolated points of $X$ and $Y$, respectively. Then the following are equivalent:
\begin{enumerate}
\item[\rm (i)] $(X, A)\cong (Y, B)$, i.e., there is a homeomorphism $f: X\to Y$ with $f[A]=B$.
\item[\rm (ii)] $I(X, A)$ and $I(Y, B)$ are homeomorphic.
\end{enumerate}
\end{prop}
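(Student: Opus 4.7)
The plan is to identify $A\times\{0\}$ as a topological invariant of $I(X,A)$, so that (i) $\Rightarrow$ (ii) is handled by transporting the coding data along a homeomorphism of pairs, while (ii) $\Rightarrow$ (i) is handled by reading the pair $(X,A)$ back off from the topology of $I(X,A)$.

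For (i) $\Rightarrow$ (ii), given $f\colon X\to Y$ with $f[A]=B$, I would fix an enumeration $(q_n)$ of a countable dense subset of $A$ in which each point is repeated infinitely often, and set $D=\{(q_n,1/n):n\ge 1\}\in\mathcal{D}_{X,A}$. Because $f$ restricts to a homeomorphism $A\to B$, the sequence $(f(q_n))$ still enumerates, with infinite repetition, a dense subset of $B$, so $D'=\{(f(q_n),1/n):n\ge 1\}$ lies in $\mathcal{D}_{Y,B}$. The map sending $(x,0)\mapsto (f(x),0)$ and $(q_n,1/n)\mapsto (f(q_n),1/n)$ is then visibly a homeomorphism $I(X,A;D)\to I(Y,B;D')$, and Proposition~\ref{prop:unique} lets us replace these by $I(X,A)$ and $I(Y,B)$.

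For (ii) $\Rightarrow$ (i), the central step is the following topological characterization inside $I(X,A)$: the set of isolated points is exactly $D$, so $X\times\{0\}$ coincides with the set of non-isolated points, and $A\times\{0\}=\overline{D}\cap (X\times\{0\})$. To justify the isolated-point claim, I would note that points of $D$ are isolated (they sit at positive height and are isolated in $X\times (0,1]$); a point $(x,0)$ with $x\in A$ is a limit of $D$ by the defining property $\overline{D}-D=A\times\{0\}$; and a point $(x,0)$ with $x\in X-A$ is not isolated in $X$ (since $A$ contains every isolated point of $X$) and therefore is approached by other points of $X\times\{0\}$. Given any homeomorphism $h\colon I(X,A)\to I(Y,B)$, it must respect both invariants, hence restricts to a homeomorphism $X\times\{0\}\to Y\times\{0\}$ carrying $A\times\{0\}$ onto $B\times\{0\}$; projecting onto the first coordinate produces the required $g\colon X\to Y$ with $g[A]=B$.

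The only delicate point is the isolated-point characterization, specifically the case $x\in X-A$, which uses in an essential way the standing hypothesis that $A$ contains every isolated point of $X$; without it, an isolated point of $X-A$ would survive as an isolated point of $I(X,A)$ and the topological identification of $X\times\{0\}$ would fail. Modulo that observation the argument is routine, so I do not anticipate any substantial obstacle; the edge case $A=\emptyset$ is handled by the same argument, with $\overline{D}\cap (X\times\{0\})=\emptyset$ matching $A\times\{0\}=\emptyset$.
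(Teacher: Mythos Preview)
Your argument is correct and matches the paper's intended approach: the paper states this result without proof (deferring to Proposition~1 of \cite{Z} together with Proposition~\ref{prop:unique}), but immediately afterward notes that $X\times\{0\}$ is precisely the Cantor--Bendixson derivative of $I(X,A)$, which is exactly the topological invariant you use for (ii)$\Rightarrow$(i). Your handling of (i)$\Rightarrow$(ii) via transporting $D$ along $f\times\mathrm{id}$ and invoking Proposition~\ref{prop:unique}, and your careful treatment of the hypothesis that $A$ contains all isolated points of $X$, are both on target.
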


The map $i: X\to I(X, A)$ defined by $i(x)=(x,0)$ is a natural homeomorphic embedding of $X$ into $I(X, A)$. Under this embedding, $X$ can be viewed as a closed subspace of $I(X, A)$. In fact, $X$ is now the subspace of all limit points in $I(X, A)$, i.e., the Cantor-Bendixson derivative of $I(X, A)$. Thus the Cantor-Bendixson rank of $I(X, A)$ is exactly $1+\alpha$, where $\alpha$ is the Cantor-Bendixson rank of $X$.

Continue to assume $X$ is a compact metric space. For any increasing sequence of closed subspaces $A_1\subseteq A_2\subseteq \cdots \subseteq A_n$, where $A_1$ contains all isolated points of $X$, define $I_n(X, A_1, \dots, A_n)$ inductively as follows. Let $I_1(X, A_1)=I(X, A_1)$ and let $D_1=I_1(X, A_1)-X$. In general assume $I_k(X, A_1,\dots, A_k)$ has been defined, and let $D_k=I_k(X, A_1, \dots, A_k)-I_{k-1}(X, A_1, \dots, A_{k-1})$. Each $D_k$ is the set of all isolated points in $I_k(X, A_1, \dots, A_k)$. Define $$I_{k+1}(X, A_1, \dots, A_{k+1})=I(I_k(X, A_1, \dots, A_k), D_k\cup A_{k+1}). $$
As in the previous theorem, denote $(X, \vec{A})\cong (Y, \vec{B})$ if there is a homeomorphism $f: X\to Y$ such that $f[A_i]=B_i$ for all $i$. It follows from Proposition~\ref{prop:unique} that the homeomorphism type of $I_n(X, \vec{A})$ does not depend on exactly how the extra points of $I_n(X, \vec{A})-X$ are selected. Note that we do not assume that $A_k$ are nonempty. In general, if $A_k$ is empty, then each $A_i$, $i<k$, is also empty, and it follows that $X$ has no isolated points. In this case $I_k(X, A_1, \dots, A_k)$ is homeomorphic to a disjoint union of $X$ with a countable compact space of Cantor-Bendixson rank $k$. We have a generalization of the above proposition as follows.

\begin{prop} Let $X, Y$ be compact metric spaces, and $\vec{A}$ and $\vec{B}$ be increasing sequences of $n$ many closed subspaces of  $X$ and $Y$ respectively, with $A_1$ and $B_1$ contain all isolated points of $X$ and $Y$ respectively. Then the following are equivalent:
\begin{enumerate}
\item[\rm (i)] $(X, \vec{A})\cong (Y, \vec{B})$.
\item[\rm (ii)] $I_n(X, \vec{A})$ and $I_n(Y, \vec{B})$ are homeomorphic.
\end{enumerate}
\end{prop}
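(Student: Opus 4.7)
The plan is to proceed by induction on $n$, with base case $n=1$ being the proposition stated immediately before. The structural backbone is the canonical Cantor--Bendixson filtration of $I_n(X,\vec{A})$: for each $k\leq n$, the $(n-k)$-th iterated Cantor--Bendixson derivative of $I_n(X,\vec{A})$ is $I_k(X,A_1,\dots,A_k)$, whose set of isolated points is $D_k^X$. In particular, $X$ itself is the $n$-th derivative. Any homeomorphism $h:I_n(X,\vec{A})\to I_n(Y,\vec{B})$ therefore induces homeomorphisms at every level and bijections $D_k^X\to D_k^Y$.

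For (ii) $\Rightarrow$ (i), let $f:=h|_X$, which is a homeomorphism from $X$ onto $Y$. By the definition of the operator $I$, the accumulation points of $D_k^X$ inside $I_k(X,A_1,\dots,A_k)$ are precisely $D_{k-1}^X\cup A_k^X$, with the convention $D_0^X=\emptyset$. Since $h$ preserves accumulation sets and sends $D_k^X$ onto $D_k^Y$, it sends $D_{k-1}^X\cup A_k^X$ onto $D_{k-1}^Y\cup B_k^Y$. Because $D_{k-1}^X\subseteq I_{k-1}\setminus I_{k-2}$ is disjoint from $X$ while $A_k^X\subseteq X$, intersecting both sides with $X$ yields $f[A_k^X]=B_k^Y$ for every $k$.

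For (i) $\Rightarrow$ (ii), I strengthen the inductive statement to: whenever $f:X\to Y$ is a homeomorphism satisfying $f[A_i]=B_i$ for all $i\leq n$, there is a homeomorphism $\tilde f:I_n(X,\vec{A})\to I_n(Y,\vec{B})$ with $\tilde f|_X=f$. Inductively, assume such an extension has been built at level $n-1$, yielding $g:I_{n-1}(X,A_1,\dots,A_{n-1})\to I_{n-1}(Y,B_1,\dots,B_{n-1})$ with $g|_X=f$. Being a homeomorphism, $g$ carries $D_{n-1}^X$ onto $D_{n-1}^Y$, and combined with $f[A_n]=B_n$ this gives $g[D_{n-1}^X\cup A_n^X]=D_{n-1}^Y\cup B_n^Y$. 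Pick any $D$ in the family $\mathcal{D}$ associated with $I_{n-1}(X,A_1,\dots,A_{n-1})$ and the closed subspace $D_{n-1}^X\cup A_n^X$; then $D':=(g\times\mathrm{id}_{(0,1]})[D]$ belongs to the analogous family on the $Y$-side, and extending $g$ by $g\times\mathrm{id}_{(0,1]}$ on the new isolated points yields a homeomorphism of the corresponding $I_n$'s. Proposition~\ref{prop:unique} ensures that the construction represents the homeomorphism type of $I_n(X,\vec{A})$ and $I_n(Y,\vec{B})$ unambiguously.

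The main obstacle is conceptual: the induction for (i) $\Rightarrow$ (ii) requires the strengthened hypothesis that the constructed homeomorphism extend a fixed $f$ on $X$, because the plain statement cannot be iterated---the level-$(n-1)$ homeomorphism it produces need not respect $A_n$. Once the right inductive framework is in place, the remaining work is the routine continuity verification, which follows from the $(g\times\mathrm{id})$-transport and Proposition~\ref{prop:unique}.
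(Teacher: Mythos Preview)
Your argument is correct and is precisely the natural induction the paper has in mind; the paper itself does not spell out a proof but merely remarks that the proposition is ``only a formal generalization of Zielinski's construction of $I_2(X,B,A)$.'' Your use of the Cantor--Bendixson filtration to recover each $I_k$ and hence each $A_k$, together with the $(g\times\mathrm{id})$ transport and Proposition~\ref{prop:unique} for the forward direction, is exactly how one unwinds that remark.
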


This is only a formal generalizaton of Zielinski's construction of $I_2(X, B, A)$. Note that our notation is slightly different from his.

In general, if $S_1, \dots, S_n \subseteq X$ are closed subspaces of a compact metric space $X$, we code the homeomorphism type of $(X, \vec{S})$. Assume that each $S_k$ contains all isolated points of $X$. We define an increasing sequence $A_1\subseteq \cdots\subseteq A_{2^n-1}$ by induction as follows. 

If $n=1$ let $A_1=S_1$ and we are done. In general, if an increasing sequence $B_1\subseteq \cdots\subseteq B_{2^k-1}$ has been defined for $S_1,\dots, S_k$, we define a sequence $A_1\subseteq\cdots \subseteq A_{2^{k+1}-1}$ for $S_1, \dots, S_k, S_{k+1}$ by
$$
A_i=\left\{\begin{array}{ll} B_i\cap S_{k+1}, & \mbox{ if $1\leq i\leq 2^k-1$} \\
S_{k+1}, & \mbox{ if $i=2^k$} \\ B_{i-2^k}\cup S_{k+1}, & \mbox{ if $2^k+1\leq i\leq 2^{k+1}-1$} \end{array}\right.
$$
This sequence allows all $2^k-1$ many elements of the Boolean algebra of sets generated by $S_1, \dots, S_n$ to be recovered. We denote the sequence as $\vec{A}(\vec{S})$. It is easy to check that $(X, \vec{S})\cong (Y, \vec{T})$ if and only if $(X, \vec{A}(\vec{S}))\cong (Y, \vec{A}(\vec{T}))$.

Combined with the above coding, the homeomorphism type of $(X, \vec{S})$ can be coded by the homeomorphism type of a single compact metric space, which we denote by $I(X, \vec{S})$. If $X$ has Cantor-Bendixson rank $\alpha$ and $\vec{S}$ has length $k$, then the Cantor-Bendixson rank of $I(X, \vec{S})$ is at most $2^k-1+\alpha$. 

\begin{prop}\label{prop:gen} Let $X, Y$ be compact metric spaces, and $\vec{S}$ and $\vec{T}$ be sequences of closed subspaces of  $X$ and $Y$ respectively, with each $S_i$ and each $T_i$ contain all isolated points of $X$ and $Y$ respectively. Then the following are equivalent:
\begin{enumerate}
\item[\rm (i)] $(X, \vec{S})\cong (Y, \vec{T})$.
\item[\rm (ii)] $I(X, \vec{S})$ and $I(Y, \vec{T})$ are homeomorphic.
\end{enumerate}
\end{prop}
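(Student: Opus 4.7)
The plan is to reduce this to the previous proposition via the inductively defined sequence $\vec{A}(\vec{S})$, using that $I(X, \vec{S})$ is by definition $I_{2^n-1}(X, \vec{A}(\vec{S}))$. Concretely, it suffices to establish two things. First, $\vec{A}(\vec{S})$ is an increasing sequence of closed subspaces of $X$ whose initial term contains every isolated point of $X$, so that the preceding proposition applies to $\vec{A}(\vec{S})$ and $\vec{A}(\vec{T})$. Second, $(X, \vec{S}) \cong (Y, \vec{T})$ if and only if $(X, \vec{A}(\vec{S})) \cong (Y, \vec{A}(\vec{T}))$.

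The first item is a straightforward induction on the length $n$ of $\vec{S}$. Each $A_i(\vec{S})$ is built from the $S_j$ by finitely many intersections and unions, so each is closed. For the inductive step of monotonicity, one uses that $B \mapsto B \cap S_{k+1}$ and $B \mapsto B \cup S_{k+1}$ both preserve inclusions, and that $B_{2^k-1} \cap S_{k+1} \subseteq S_{k+1} \subseteq B_1 \cup S_{k+1}$ bridges the two halves. A parallel induction shows $A_1(\vec{S}) = S_1 \cap \cdots \cap S_n$, which contains every isolated point of $X$ since each $S_i$ does by hypothesis.

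For the second item, the forward direction is immediate, because $f[A \cap B] = f[A] \cap f[B]$ and $f[A \cup B] = f[A] \cup f[B]$ for any bijection $f$, so the recursive construction of $\vec{A}$ is preserved by any homeomorphism taking $S_i$ to $T_i$. For the backward direction, I would prove by induction on $n$ that each $S_i$ can be recovered set-theoretically from the $A_j(\vec{S})$. The crucial identities, both immediate from the recursive definition, are
\[ S_{k+1} = A_{2^k} \quad\text{and}\quad B_j = A_j \cup (A_{j+2^k} \setminus A_{2^k}) \text{ for } 1 \leq j \leq 2^k - 1, \]
where $\vec{B}$ denotes the analogous sequence for $S_1, \ldots, S_k$. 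Since $f[C \setminus D] = f[C] \setminus f[D]$ for any bijection, any homeomorphism matching the $A_i(\vec{S})$ to the $A_i(\vec{T})$ matches the $B_j(\vec{S})$ to the $B_j(\vec{T})$ and $S_{k+1}$ to $T_{k+1}$; the inductive hypothesis applied to $\vec{B}$ then recovers $f[S_i]=T_i$ for $i \leq k$. Invoking the previous proposition completes the argument. The only substantive content is the explicit recovery formula above; otherwise the argument is bookkeeping, and I do not expect a real obstacle.
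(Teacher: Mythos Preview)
Your proposal is correct and matches the paper's approach exactly: the paper does not give a formal proof of this proposition but states just before it that ``it is easy to check that $(X,\vec{S})\cong(Y,\vec{T})$ if and only if $(X,\vec{A}(\vec{S}))\cong(Y,\vec{A}(\vec{T}))$'' and defines $I(X,\vec{S})$ as $I_{2^n-1}(X,\vec{A}(\vec{S}))$, leaving the rest to the previous proposition. Your write-up supplies the details the paper omits, and in particular your explicit recovery formula $B_j=A_j\cup(A_{j+2^k}\setminus A_{2^k})$ is correct and makes the backward direction transparent.
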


\section{Connected coding spaces}

Let $X$ be a compact metric space. We define the {\em fan space} $F_X$ of $X$ as the quotient of $X\times[0,1]$ by the equivalence relation $\sim$ defined as
$$ (x, s)\sim (y, t)\iff (x, s)=(y, t) \mbox{ or } s=t=1. $$
The point $[(x,1)]_\sim$ in $F_X$ is a distinguished point; we denote it by $\gamma_X$ and call it the {\em apex}. $X$ can be viewed, again in a canonical way, as a subspace of $F_X$.

$F_X$ is obviously compact. We note that it can be given a canonical metric:
$$ d_F((x,s), (y,t))=2|s-t|+(1-\max\{s,t\})\rho(x,y), $$
where $\rho<1$ is a compatible metric on $X$. $F_X$ is also clearly a path-connected space: for every point $(x,s)$ there is a canonical path $P$ from $(x, s)$ to $\gamma_X$, namely, 
$$ P(\tau)=(x, s+\tau(1-s)) \mbox{ for $\tau\in[0,1]$}. $$
Therefore $F_X$ is a path-connected continuum. 

\begin{lemma}\label{lem:1} Let $X, Y$ be compact metric spaces. Any homeomorphism $f: X\to Y$ can be extended to a homeomorphism $\tilde{f}: F_X\to F_Y$.
\end{lemma}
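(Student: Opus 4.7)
The plan is to define $\tilde{f}$ by the obvious formula $\tilde{f}([(x,s)]_\sim) = [(f(x), s)]_\sim$, and then verify that it is well-defined, a bijection, and continuous in both directions. This is mostly a routine quotient-topology argument; essentially, the fan construction $X \mapsto F_X$ is functorial in homeomorphisms, and $\tilde{f}$ is the image of $f$ under this functor.

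First I would check well-definedness. The equivalence classes under $\sim$ are either singletons $\{(x,s)\}$ with $s < 1$, or the entire set $X \times \{1\}$. The formula collapses $X \times \{1\}$ in $X \times [0,1]$ to a single point, and the map $(x,s) \mapsto (f(x),s)$ from $X \times [0,1]$ to $Y \times [0,1]$ respects this equivalence: if $s = t = 1$ then $(f(x),1)$ and $(f(y),1)$ both lie in $Y \times \{1\}$, which is collapsed to $\gamma_Y$. So $\tilde{f}$ descends to a well-defined map on $F_X$, sending $\gamma_X$ to $\gamma_Y$.

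Next I would verify continuity. The cleanest way is to use the universal property of the quotient topology: let $q_X: X\times [0,1] \to F_X$ and $q_Y: Y\times[0,1] \to F_Y$ be the quotient maps. Then $\tilde{f}\circ q_X = q_Y \circ (f\times \text{id})$, and the right-hand side is continuous as a composition of continuous maps, so $\tilde{f}$ is continuous by the universal property. Alternatively, one can use the explicit metric $d_F$ given in the excerpt: choosing compatible metrics $\rho < 1$ on $X$ and $\rho' < 1$ on $Y$ so that $f$ is uniformly continuous, estimates on $d_F$ in $F_Y$ follow directly from estimates on $d_F$ in $F_X$, using that the factor $(1 - \max\{s,t\})$ vanishes as one approaches the apex and so controls the contribution of $\rho(x,y)$.

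Finally I would observe that since $f^{-1}: Y \to X$ is also a homeomorphism, the analogous construction yields a continuous $\widetilde{f^{-1}}: F_Y \to F_X$, and the formula makes clear that $\widetilde{f^{-1}} \circ \tilde{f} = \text{id}_{F_X}$ and $\tilde{f}\circ \widetilde{f^{-1}} = \text{id}_{F_Y}$. Hence $\tilde{f}$ is a homeomorphism extending $f$ (under the canonical embedding $X \hookrightarrow F_X$ given by $x \mapsto [(x,0)]_\sim$). There is no genuine obstacle here; the only mild point of care is that continuity at the apex must be checked separately from continuity on $X \times [0,1) / \sim$, but both the quotient-topology approach and the metric approach handle this uniformly.
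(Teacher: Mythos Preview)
Your proof is correct and uses the same definition as the paper: $\tilde{f}(x,s)=(f(x),s)$. The paper's proof is a single line giving this formula without further verification, so your write-up simply fills in the routine details the paper omits.
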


\begin{proof} Just define $\tilde{f}(x, s)=(f(x), s)$ for $x\in X$ and $s\in[0,1]$. \end{proof}

Next we code pairs $(X, A)$. Given a compact metric space $X$ and a closed subspace $A\subseteq X$, define $F(X, A)$ as a subspace of the fan space $F_X$:
$$ F(X, A)=\{ [(x,s)]_\sim\in F_X\,:\, s=0 \mbox{ or } x\in A\}. $$
Alternatively, we consider the equivalence relation $\sim$ defined above, restricted to the space 
$$ X\times\{0\}\cup A\times [0,1]. $$
$F(X,A)$ is the again the quotient space given by $\sim$.

There is a canonical homeomorphic copy of $X$ in $F(X, A)$, namely $X\times\{0\}$, and a canonical homeomorphic copy of $F_A$ in $F(X, A)$. We will denote the distinguished point $\gamma_{F_A}$ by $x^*$, when there is no danger of confusion.

It is easy to see that 
if $X$ is (path-)connected, then so is $F(X,A)$. 

\begin{lemma}\label{lem:2} Let $X, Y$ be compact metric spaces and $A\subseteq X$ and $B\subseteq Y$ be closed subspaces. For any homeomorphism $f: X\to Y$ with $f[A]=B$ there is an extended homeomorphism $\tilde{f}: F(X, A)\to F(Y, B)$. Thus if $(X, A)\cong (Y, B)$ then $F(X, A)$ is homeomorphic to $F(Y, B)$.
\end{lemma}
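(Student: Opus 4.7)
The plan is to lift $f$ through the quotient maps used to define the two fan-type spaces, essentially mimicking the proof of Lemma~\ref{lem:1}. First I would define a pre-quotient map
$$\hat{f}:\, X\times\{0\}\cup A\times[0,1]\;\longrightarrow\; Y\times\{0\}\cup B\times[0,1]$$
by $\hat{f}(x,s)=(f(x),s)$. Because $f[A]=B$ (and consequently $f^{-1}[B]=A$), the map $\hat{f}$ is a bijection with inverse $(y,s)\mapsto (f^{-1}(y),s)$, and it is continuous as a restriction of $f\times \ide_{[0,1]}$.

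Next I would check that $\hat{f}$ respects the equivalence relation $\sim$, so it descends to the quotients. Two points are $\sim$-equivalent iff they coincide or both have second coordinate $1$, and since $\hat{f}$ fixes second coordinates, this relation is preserved in both directions. Thus $\hat{f}$ and its inverse each descend to well-defined mutually inverse functions $\tilde{f}:F(X,A)\to F(Y,B)$ and $\widetilde{f^{-1}}:F(Y,B)\to F(X,A)$, with $\tilde{f}([(x,s)]_\sim)=[(f(x),s)]_\sim$. By construction $\tilde{f}$ extends $f$ under the canonical embedding $X\hookrightarrow F(X,A)$ given by $x\mapsto[(x,0)]_\sim$.

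Finally I would verify that $\tilde{f}$ is a homeomorphism. Writing $q_X$ and $q_Y$ for the two quotient maps, the composition $q_Y\circ\hat{f}$ is continuous, so the universal property of the quotient topology on $F(X,A)$ yields continuity of $\tilde{f}$; applying the same argument to $f^{-1}$ gives continuity of the inverse. Alternatively, since $F(X,A)$ and $F(Y,B)$ are compact Hausdorff (indeed, the explicit metric $d_F$ exhibited before Lemma~\ref{lem:1} restricts to a compatible metric on each of them), any continuous bijection between them is automatically a homeomorphism.

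The only obstacle here is bookkeeping: one needs the hypothesis $f[A]=B$ precisely to ensure $\hat{f}$ maps the subspace $X\times\{0\}\cup A\times[0,1]$ onto $Y\times\{0\}\cup B\times[0,1]$, and one must confirm $\sim$ is preserved in both directions so that the descent to the quotient is genuinely a bijection. No substantive analytic difficulty arises beyond these routine checks, and the second sentence of the lemma follows immediately from the existence of $\tilde{f}$.
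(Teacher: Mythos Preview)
Your argument is correct and yields exactly the same map $\tilde f([(x,s)]_\sim)=[(f(x),s)]_\sim$ as the paper. The only difference is packaging: the paper invokes Lemma~\ref{lem:1} to obtain $\tilde f:F_X\to F_Y$ on the full fan spaces and then restricts to the subspace $F(X,A)\subseteq F_X$, whereas you build $\tilde f$ directly on $F(X,A)$ via the pre-quotient map and the universal property; both routes are routine and equivalent.
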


\begin{proof} Lemma~\ref{lem:1} gives an extended homeomorphism $\tilde{f}:F_X\to F_Y$, which can be restricted to $F(X,A)$ to give the required homeomorphism to $F(Y, B)$.
\end{proof}

The next coding space $J(X,A)$ is based on the space $I(X, A)$ defined in the previous section. Write $I(X, A)=X\cup D$, where $D$ is the set of all isolated points in $I(X, A)$. Note that $\overline{D}=D\cup A$. We define $$J(X, A)=F(I(X, A), \overline{D}).$$
In general, if $\vec{S}$ is a finite sequence of closed subspaces of $X$ such that each $S_i$ contains all isolated points of $X$, we write $I(X,\vec{S})=X\cup D$ as a disjoint union (note that $D$ is no longer necessarily the set of all isolated points). Note that $\overline{D}=D\cup\bigcup \vec{S}$. We define
$$ J(X,\vec{S})=F(I(X,\vec{S}), \overline{D}). $$

The following is our main technical theorem on connected coding spaces.

\begin{theorem}\label{thm:J} Let $X, Y$ be continua without cut-points and $\vec{S}, \vec{T}$ be sequences of closed subspaces of $X, Y$ respectively. Then the following are equivalent:
\begin{enumerate}
\item[\rm (i)] $(X, \vec{S})\cong (Y, \vec{T})$.
\item[\rm (ii)] $J(X, \vec{S})$ and $J(Y, \vec{T})$ are homeomorphic.
\end{enumerate}
\end{theorem}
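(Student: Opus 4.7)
The forward direction (i) $\Rightarrow$ (ii) is straightforward: given $f\colon X \to Y$ with $f[S_i] = T_i$, the stagewise construction behind Proposition~\ref{prop:unique} extends $f$ to a homeomorphism $\tilde{f}\colon I(X, \vec{S}) \to I(Y, \vec{T})$ that maps each set of newly adjoined isolated points to its counterpart, so in particular $\tilde{f}[\overline{D_X}] = \overline{D_Y}$; Lemma~\ref{lem:2} then produces the required homeomorphism $J(X, \vec{S}) \to J(Y, \vec{T})$.

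For the nontrivial direction (ii) $\Rightarrow$ (i), the plan is to topologically recover the subspace $I(X, \vec{S}) \subseteq J(X, \vec{S})$ so that any homeomorphism $\varphi\colon J(X, \vec{S}) \to J(Y, \vec{T})$ restricts to a homeomorphism $I(X, \vec{S}) \to I(Y, \vec{T})$, after which Proposition~\ref{prop:gen} concludes. First I identify the apex $x^*$ as the unique point whose removal leaves infinitely many connected components: removing $x^*$ splits $J$ into the ``main component'' $X \cup \bigcup_{z \in \overline{D} \cap X} \{z\} \times [0, 1)$ together with one free arc $\{d\} \times [0, 1)$ for each $d \in D$, while removing any other point leaves at most two components. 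The latter is a case analysis (separately for a base point, a ray interior, and an attachment point) which uses the no-cut-point hypothesis on $X$ in an essential way.

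Next, the components of $J \setminus \{x^*\}$ split into the main component $M$, distinguished as the unique component not homeomorphic to $[0, 1)$ (since $X$ is a non-degenerate continuum without cut-points, hence not an arc), together with the free arcs $\{d\} \times [0, 1)$; the closed endpoint of each free arc is its unique non-cut-point, so the collection of these endpoints recovers the set $D$ inside $J$. Because the fan metric satisfies $d_F((d, 0), x^*) = 2$, $D$ does not accumulate on $x^*$, so its closure in $J$ is $D \cup A$ where $A = \overline{D} \cap X$, recovering $A$ topologically. Finally, inside $M$, another case analysis using the no-cut-point hypothesis shows that the cut-points of $M$ form exactly the union of the ``whiskers'' $W_z' = \{z\} \times [0, 1)$ for $z \in A$, and these whiskers are pairwise disjoint, so they appear as the connected components of the cut-point set of $M$. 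Deleting the open parts $W_z' \setminus \{z\}$ from $M$ leaves $X$, and combined with $D$ this recovers $I(X, \vec{S}) = X \cup D$ together with $\overline{D}$. Proposition~\ref{prop:gen} then yields $(X, \vec{S}) \cong (Y, \vec{T})$.

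The main technical obstacle is the cut-point analysis in $J$ and in $M$, where one must verify case by case that the hypothesis that $X$ has no cut-points is exactly what prevents spurious cut-points. A secondary subtlety is that the apex characterization requires $D$ to be infinite, i.e., some $S_i$ to be nonempty; the degenerate case in which every $S_i$ is empty makes $J(X,\vec{S})$ a disjoint union $X \sqcup [0,1]$ and reduces the statement directly to $X \cong Y$.
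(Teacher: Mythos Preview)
Your overall strategy---identify $x^*$, then recover $I(X,\vec{S})\times\{0\}$ as a subspace of $J(X,\vec{S})$, then invoke Proposition~\ref{prop:gen}---is sound, and the description of the components of $J\setminus\{x^*\}$ as the main piece $M$ together with the arcs $\{d\}\times[0,1)$ for $d\in D_X$ is correct. The gap is in the next step, where you assert that the cut-points of $M$ are exactly the whiskers $\{z\}\times[0,1)$ for $z\in A=S_k$. This fails whenever $S_k$ is not discrete. If $z\in S_k$ is a limit of points $z_n\in S_k$, then for any $s\in[0,1)$ the set $M\setminus\{(z,s)\}$ is still connected: the ``upper piece'' $\{z\}\times(s,1)$ lies in the closure of the connected ``lower piece'' (which contains the neighboring whiskers $\{z_n\}\times[0,1)$ attached to $X\times\{0\}$), and a connected set together with part of its closure remains connected. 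Thus $(z,s)$ is not a cut-point of $M$, and your procedure removes only the whiskers at \emph{isolated} points of $S_k$, leaving behind $X\times\{0\}$ together with all whiskers over the perfect kernel of $S_k$. Since in the intended application the sets $S_i$ are arbitrary closed subsets of a perfect space, this case cannot be excluded.

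The paper avoids this difficulty by taking a different route: rather than trying to strip off all of $D_X$ at once, it gives a local characterization (its properties $(\alpha)$ and $(\beta)$) that picks out only $E_X\times\{0\}$, where $E_X$ is the set of \emph{isolated} points of $I(X,\vec{S})$, and then proceeds by induction on the length of $\vec{S}$, removing one Cantor--Bendixson layer of arcs at each step. Your global approach can be repaired---for instance, once you have the free arcs $D_X\times[0,1)$, their closure in $J$ is $F_{\overline{D_X}}$, and then $X\times\{0\}$ is the closure of $J\setminus F_{\overline{D_X}}$ (at least when $S_k\neq X$)---but the cut-point argument for $M$ as written does not go through. (A minor additional point: distinguishing $M$ as the unique component not homeomorphic to $[0,1)$ tacitly assumes $X$ is non-degenerate, and your description of the degenerate case $\vec{S}=\vec{\emptyset}$ as $X\sqcup[0,1]$ is only correct for length one; for length $k$ it is $X$ together with the fan over a countable compact space of Cantor--Bendixson rank $k$.)
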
 

\begin{proof} Since $X$ and $Y$ are connected, they do not contain any isolated points. 

We first show (i)$\Rightarrow$(ii). Suppose $(X,\vec{S})\cong (Y, \vec{T})$. From Proposition~\ref{prop:gen} there is a homeomorphism $h$ from $I(X, \vec{S})$ to $I(Y, \vec{T})$. If the length of $\vec{S}$ is $k$, then there is $1\leq n\leq 2^k-1$ such that $X$ is the $n$-th Cantor-Bendixson derivative of $I(X,\vec{S})$. In this case the length of $\vec{T}$ is also $k$, and $Y$ is also the $n$-th Cantor-Bendixson derivative of $I(Y,\vec{T})$.  Writing $I(X, \vec{S})=X\cup D_X$ and $I(Y, \vec{T})=Y\cup D_Y$ as disjoint unions, we have $h[X]=Y$ and $h[\overline{D}_X]=\overline{D}_Y$. From Lemma~\ref{lem:2}, we have that $J(X,\vec{S})$ and $J(Y,\vec{T})$ are homeomorphic.

In the rest of the proof we show (ii)$\Rightarrow$(i). For this, it suffices to consider the special case where $\vec{S}$ and $\vec{T}$ are increasing sequences of closed subspaces of $X$ and $Y$ respectively. Our proof will proceed by induction on the length of $\vec{S}$. 

We first fix some notation. Suppose $S_1\subseteq \dots\subseteq S_k$. Now 
$$ I(X, \vec{S})=I_k(X, S_1, \dots, S_k). $$
We write $I(X,\vec{S})$ as a disjoint union $X\cup D_X$. Then 
$$ J(X,\vec{S})=F(I(X,\vec{S}), \overline{D}_X). $$
By slightly abusing the notation, we think of $J(X,\vec{S})$ as a subset of $I(X,\vec{S})\times[0,1]$ (where in fact it is a subset of a quotient). Thus each element of $J(X,\vec{S})$ is a pair $(u, s)$, where $u\in I(X, \vec{S})$ and $s\in [0,1]$. The distinguished point of $J(X,\vec{S})$, that is, the unique point with $s=1$, is denoted as $x^*$. If $z_0=(u_0,s_0)$ where $u_0\in \overline{D}_X$ and $s_0\in [0,1)$, then $\{(u_0,s)\,:\, s\in[s_0,1]\}$ is a path in $J(X,\vec{S})$ from $z_0$ to $x^*$; we refer to this path as the {\em canonical path} from $z_0$ to $x^*$.

Let $E_X$ be the set of all isolated points in $I(X,\vec{S})$. 
Easily $E_X\subseteq D_X$. By an easy induction one can show that $\overline{E}_X=\overline{D}_X$, that is, $E_X$ is dense in $D_X$. Since $S_1\subseteq\dots\subseteq S_k$, we also have $\overline{E}_X\cap X=S_k$.

We will use similar notation $D_Y, E_Y, y^*$ and similar convention on the $Y$ side.

For the record we note that $J(X,\vec{S})$ is not always connected. Consider the case $S_k=\emptyset$. It follows that $S_1=\dots=S_k=\emptyset$. Now $D_X=\overline{D}_X$ is itself a countable compact metric space of Cantor-Bendixson rank $k$. In this case $J(X, \vec{S})$ has exactly two components, $X\times\{0\}$ and $F_{D_X}$. This is the only case when $J(X,\vec{S})$ is not connected. In fact, if $S_k\neq\emptyset$, then $\overline{D}_X\cap X=S_k\neq\emptyset$, and $J(X, \vec{S})$ is now connected because it is a non-disjoint union of two connected subspaces $X\times\{0\}$ and $F_{D_X}$. 

We argue that the theorem is still true in the case $S_k=\emptyset$. To continue the analysis of the situation, we note that $F_{D_X}$ always has a cut-point. In fact, $D_X$ contains at least one isolated point $u_0$, and therefore every point on the canonical path from $z_0=(u_0,0)$ to $x^*$ is a cut-point. Since $X$ has no cut-points, neither does $X\times\{0\}$, and therefore the two components $X\times\{0\}$ and $F_{\overline{D}_X}$ are not homeomorphic. Similar analysis applies to $J(Y, \vec{T})$. Now suppose $S_k=\emptyset$ and $J(X,\vec{S})$ is homeomorphic to $J(Y, \vec{T})$. It follows that each $T_i=\emptyset$. Moreover, if $h$ is a homeomorphism from $J(X, \vec{S})$ to $J(Y, \vec{T})$, then $h[X\times\{0\}]=Y\times\{0\}$ and $h[F_{D_X}]=F_{D_Y}$ since any homeomorphism must send a component to a component. In particular $X$ is homeomorphic to $Y$, and the length of $\vec{S}$ equals the length of $\vec{T}$. Since any homeomorphism sends $\emptyset$ to $\emptyset$, we have $(X,\vec{S})\cong (Y, \vec{T})$.

For the rest of the proof we assume $S_k\neq\emptyset$. 

First note that $x^*$ is a cut-point of $J(X,\vec{S})$ so that $J(X,\vec{S})-\{x^*\}$ has infinitely many components. In fact, for each $u\in E_X$, the canonical path in $J(X,\vec{S})$ from $(u, 0)$ to $x^*$, less $x^*$, is a component of $J(X,\vec{S})-\{x^*\}$. Since $E_X$ is infinite, $J(X,\vec{S})-\{x^*\}$ has infinitely many components. We argue that $x^*$ is the only point in $J(X, \vec{S})$ with this property. 

Let $z_0\in J(X, \vec{S})$. Assume $z_0=(u_0, s_0)$ where $u_0\in I(X,\vec{S})$ and $s_0\in [0,1)$. We claim that $J(X,\vec{S})-\{z_0\}$ has at most two components. Consider two cases: (A) $u_0\in D_X$ and (B) $u_0\not\in D_X$. 

First consider case (A): $u_0\in D_X$. Let 
$$ L=\{ (u_0, s)\,:\, s\in [0,s_0)\} \mbox{ (if $s_0=0$ then $L=\emptyset$)} $$
and
$$ R=J(X,\vec{S})-\{z_0\}-L. $$
Then $L$ is obviously path-connected. It suffices to show that $R$ is connected. Let $C$ be the component of $x^*$ in $R$. Given any $z_1\in R$, we show that $z_1\in C$. Suppose $z_1=(u_1, s_1)$ for some $u_1\in I(X,\vec{S})$ and $s_1\in [0,1]$. We consider three subcases.

 Subcase (A1): $u_1=u_0$. Then $s_1>s_0$ and $\{(u_1, s)\,:\, s\in [s_1, 1]\}$ is a path in $R$ from $z_1$ to $x^*$. Therefore $z_1\in C$.
 
 Subcase (A2): $u_1\in \overline{D}_X-\{u_0\}$. In this case we have again that $\{(u_1, s)\,:\, s\in [s_1, 1]\}$ is a path in $R$ from $z_1$ to $x^*$, and hence $z_1\in C$.
 
 Subcase (A3): $u_1\not\in \overline{D}_X$. It follows that $u_1\in X$ and $s_1=0$. Pick any $u_2\in S_k$ and let $z_2=(u_2,0)$. The canonical path from $z_2$ to $x^*$ witnesses that $z_2\in C$. Now $z_1, z_2\in X\times\{0\}$ and $X\times\{0\}$ is connected. Thus $z_1\in C$.
 
This completes the proof of the claim in case (A). We next turn to case (B): $u_0\not\in D_X$. We consider two subcases.

Subcase (B1): $s_0=0$. Thus $z_0\in X\times\{0\}$. Since $J(X,\vec{S})$ is the union of $X\times\{0\}$ with $F_{\overline{D}_X}$, we have that $J(X,\vec{S})-\{z_0\}$ is the union of $(X\times\{0\})-\{z_0\}$ with $F_{\overline{D}_X}-\{z_0\}$. It suffices to see that both are connected. $(X\times\{0\})-\{z_0\}$ is connected since $X$ has no cut-points. For every point $z_1=(u_1,s_1)\in F_{\overline{D}_X}-\{z_0\}$, the canonical path $\{(u_1,s)\,:\, s\in [s_1,1]\}$ from $z_1$ to $x^*$ is in $F_{\overline{D}_X}-\{z_0\}$. Thus $F_{\overline{D}_X}-\{z_0\}$ is in fact path-connected.

Subcase (B2): $s_0>0$. But $s_0\neq 1$. It follows that $u_0\in \overline{D}_X-D_X$. In this case we let 
$$ L=\{(u_0, s)\,:\, s\in [0,s_0)\}\cup (X\times \{0\})$$
and $$R=J(X,\vec{S})-\{z_0\}-L. $$
$L$ is the union of two connected subsets with a common point $(u_0,0)$, hence it is connected. We show that $R$ is also connected by showing that there is a path in $R$ from every element of $R$ to $x^*$. Suppose $z_1=(u_1, s_1)\in R$. Then $u_1\in \overline{D}_X$ and in $J(X,\vec{S})$ there is a canonical path from $z_1$ to $x^*$. If $u_1\neq u_0$ then the canonical path from $z_1$ to $x^*$ is in $R$. If $u_1=u_0$ then it must be that $s_1>s_0$ and again the canonical path from $z_1$ to $x^*$ is in $R$. 

This completes the proof of the claim in case (B). The claim establishes that $x^*$ is the only point $z$ in $J(X,\vec{S})$ so that $J(X,\vec{S})-\{z\}$ has infinitely many components.

We next claim that the following topological properties characterize points $z$ in $E_X\times\{0\}$ within the space $J(X,\vec{S})$:
\begin{enumerate}
\item[($\alpha$)] For any open set $U$ containing $z$, there is an open set $V\subseteq U$ containing $z$ so that $V$ is connected and $V$ contains cut-points of $J(X,\vec{S})$;
\item[($\beta$)]  $z$ is a noncut-point of $J(X,\vec{S})$.
\end{enumerate}
Recall that $E_X$ is the set of all isolated points in $I(X,\vec{S})$. We first verify that points in $E_X\times\{0\}$ satisfy properties ($\alpha$) and ($\beta$). For any $u_0\in E_X$ and $s_0>0$ the set $\{(u_0, s)\,:\, s\in [0,s_0)\}$ is an open neighborhood of $(u_0,0)$ in $J(X,\vec{S})$. It is clear that neighborhoods of this kind form a neighborhood base of $(u_0, 0)$, and each such neighborhood is a homeomorphic copy of $[0,1)$. Properties ($\alpha$) and ($\beta$) clearly follow from these observations.

Next we verify that no other points in $J(X,\vec{S})$ satisfy both ($\alpha$) and ($\beta$). Let $z_0=(u_0,s_0)\in J(X,\vec{S})-(E_X\times\{0\})$. We consider three cases.

Case 1: $u_0\in E_X$ and $s_0\in (0,1]$. In this case $z_0$ is a cut-point of $J(X,\vec{S})$, with $\{(u_0, s)\,:\, s\in [0,s_0)\}$ being one of the components of $J(X,\vec{S})-\{z_0\}$. Thus property ($\beta$) fails for $z_0$.

Case 2: $u_0\in \overline{D}_X-E_X=\overline{E}_X-E_X$. In this case $u_0$ is not isolated in $\overline{D}_X$, and therefore in any open neighborhood $W$ of $u_0$ in $I(X,\vec{S})$ there are infinitely many $u_1\in E_X\cap W$. It follows that in any open neighborhood $U$ of $z_0$ in $J(X,\vec{S})$ there are infinitely many $u_1\in E_X$ so that
$$ \{(u_1, s)\,:\, s\in [0,1]\}\cap U\neq\emptyset. $$
Let $V\subseteq U$ be an open set of $J(X,\vec{S})$ containing $z_0$ but with $x^*\not\in V$. Then $V$ is disconnected. This shows that $z_0$ fails property ($\alpha$).

Case 3: $u_0\not\in \overline{D}_X$. This implies that $u_0\in X$ and $s_0=0$. Moreover, there is an open neighborhood $W\subseteq X$ of $u_0$ so that $W\cap \overline{D}_X=\emptyset$. Let $V=W\times\{0\}$. Then $V$ is an open neighborhood of $z_0$ in $J(X,\vec{S})$. We will show that $z_0$ fails property ($\alpha$) by showing that $V$ does not contain any cut-point of $J(X,\vec{S})$. For this we will show a stronger fact that any $z_1=(u_1,0)$, where $u_1\not\in \overline{D}_X$, is not a cut-point of $J(X,\vec{S})$. Let $C$ be the component of $J(X,\vec{S})-\{z_1\}$ with $x^*\in C$. Let $u_2\in S_k$ and $z_2=(u_2,0)$. Then the canonical path from $z_2$ to $x^*$ witnesses that $z_2\in C$. Since $X$ has no cut-points, $X-\{u_1\}$ is connected. This implies that $X\times\{0\}-\{z_1\}$ is connected. Since $z_2\in X\times\{0\}$, we have that $X\times\{0\}-\{z_1\}\subseteq C$. Any other point of $J(X, \vec{S})$ is of the form $(u,s)$ with $s>0$, where the canonical path from it to $x^*$ witnesses that it belongs to $C$. Thus $C=J(X,\vec{S})-\{z_1\}$. 

We have thus established that $E_X\times\{0\}$ is exactly the set of points in $J(X,\vec{S})$ satisfying ($\alpha$) and ($\beta$). 

The same analysis applies to the $Y$ side. We are now ready to prove the theorem. Suppose $h$ is a homeomorphism from $J(X,\vec{S})$ to $J(Y,\vec{T})$. 

As the base case of the induction, suppose $k=1$. Suppose the length of $\vec{T}$ is $l\geq 1$. In this case $D_X=E_X$ and $S_1=\overline{E}_X\cap X=\overline{E}_X-E_X=\overline{D}_X-E_X$. We also have $T_l=\overline{D}_Y-D_Y\subseteq \overline{D}_Y-E_Y$. From the toplogical characterizations established above, we must have $h(x^*)=y^*$ and $h[E_X\times\{0\}]=E_Y\times\{0\}$. 
Let $T=\overline{E}_Y-E_Y=\overline{D}_Y-E_Y$. We also have $h[S_1\times\{0\}]=T\times\{0\}$. Now consider 
$$\tilde{X}=J(X,S_1)-(S_1\times\{0\}).$$ 
Let $C_X$ be the component of $x^*$ in $\tilde{X}$. Then 
$C_X$ is exactly $F_{\overline{D}_X}-(S_1\times\{0\})$, which implies that $\tilde{X}-C_X= (X\times\{0\})-(S_1\times\{0\})$. On the other hand, let
$$\tilde{Y}=h[\tilde{X}]=J(Y,\vec{T})-(T\times\{0\}).$$ Let $C_Y$ be the component of $y^*$ in $\tilde{Y}$. Then $C_Y$ is exactly $F_{\overline{D}_Y}-(T\times\{0\})$, which implies that $\tilde{Y}-C_Y= (Y\times\{0\})-(T_l\times\{0\})$. Since $h[\tilde{X}]=\tilde{Y}$ and $h[C_X]=C_Y$, we have that
$$ h[(X\times\{0\})-(S_1\times\{0\})]=(Y\times\{0\})-(T_l\times\{0\}). $$
Thus $h[X\times\{0\}]=(Y\times\{0\})\cup (T\times\{0\})$. This implies that $X$ and $Y\cup T$ are homeomorphic. It follows that $l=1$. Otherwise, $D_Y-E_Y\neq\emptyset$ and therefore $T$ contains isolated points of its own, implying that $Y\cup T$ is disconnected, whereas $X$ is connected, a contradiction. When $l=1$, $T=T_1\subseteq Y$, and so $X$ and $Y$ are homeomorphic. In fact, $h[X\times\{0\}]=Y\times\{0\}$.

Finally we consider the inductive step $k>1$. Suppose the length of $\vec{T}$ is $l>1$. From the topological characterizations, we have $h(x^*)=y^*$ and $h[E_X\times\{0\}]=E_Y\times\{0\}$.
Since $E_X\times [0,1]$ is the smallest path-connected subset of $J(X,\vec{S})$ containing $x^*$ and $E_X\times\{0\}$, and ditto for $E_Y\times [0,1]$, we conclude that $h[E_X\times[0,1)]=E_Y\times[0,1)$. 
Now consider $J(X,\vec{S})-(E_X\times[0,1))$, which is clearly homeomorphic  to $J(X, S_1, \dots, S_{k-1})$. Similarly, $J(Y,\vec{T})-(E_Y\times[0,1))$ is homeomorphic to $J(Y, T_1,\dots, T_{l-1})$. The restriction of $h$ witnesses that $J(X, S_1,\dots, S_{k-1})$ is homeomorphic to $J(Y, T_1,\dots, T_{l-1})$. By the inductive hypothesis, we conclude that $k=l$,  $h[X\times\{0\}]=Y\times\{0\}$, and for all $i=1,\dots, k-1$, $h[S_i\times\{0\}]=T_i\times\{0\}$. Now $S_k=\overline{D}_X\cap X=\overline{E}_X\cap X$ and similarly $T_k=\overline{E}_Y\cap Y$, we obtain that $h[S_k\times\{0\}]=T_k\times\{0\}$.

We have thus shown that $(X, \vec{S})\cong (Y, \vec{T})$, and the proof of the theorem is complete.
\end{proof}

Now Theorem~\ref{main} follows immediately from Theorems~\ref{thm:Z} and \ref{thm:J}. It is routine to verify that the reduction maps are Borel. When $X$ is path-connected and the coded subsets are nonempty, the coding space $J(X, \vec{S})$ is path-connected. We have thus obtained the universality of the homeomorphism relation for path-connected continua.

\begin{theorem} The homeomorphic classification problem of all path-connected continua is Borel bireducible with a universal orbit equivalence relation.
\end{theorem}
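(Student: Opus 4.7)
The plan is to combine Theorems~\ref{thm:Z} and \ref{thm:J} with a minor tweak of the coded sequence of subspaces that enforces path-connectedness of the coding continua. Both directions of the bireducibility then fall out.

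For the easy direction, since path-connected continua sit inside $K(\mathcal{Q})$ and the homeomorphism relation on $K(\mathcal{Q})$ is Borel reducible to the orbit equivalence relation $E(\mathcal{Q})$ of $H(\mathcal{Q})\acts K(\mathcal{Q})$, restricting that Borel reduction to the path-connected subspace already gives a Borel reduction to $E(\mathcal{Q})$, which by Zielinski's theorem (recalled in Section~2) is a universal orbit equivalence relation.

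For the hard direction, I would first invoke Theorem~\ref{thm:Z} to obtain a Borel reduction from a universal orbit equivalence relation $E$ to $\cong_{(1,1)}$ on triples $(X, B, A)$, where $X$ is Zielinski's fixed path-connected continuum without cut-points and $A\subseteq B\subseteq X$. I would then compose with the assignment $(X, B, A)\mapsto J(X, \vec{S})$, where $\vec{S}=(A, B, X)$ is obtained by appending the whole space $X$ as an extra coded subset. Appending $X$ is harmless for the equivalence relation because every self-homeomorphism of $X$ fixes $X$ setwise, so $\cong$ on the augmented sequences coincides with $\cong_{(1,1)}$ on the original triples; but this augmentation guarantees $\bigcup\vec{S}=X\neq\emptyset$, which in turn ensures that $J(X, \vec{S})$ is path-connected. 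Indeed, decomposing $J(X, \vec{S}) = (X\times\{0\})\cup F_{\overline{D}}$, the first piece is path-connected because $X$ is, the fan $F_{\overline{D}}$ is path-connected through its apex $x^*$ along canonical paths, and the two pieces meet along $\bigcup\vec{S}\times\{0\} = X\times\{0\}$, which is nonempty. Theorem~\ref{thm:J}, whose hypotheses are met since $X$ is a continuum without cut-points, then certifies that this composition is a Borel reduction from $E$ to homeomorphism on path-connected continua.

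The main remaining step is to verify that $(X, B, A)\mapsto J(X, \vec{S})$ is Borel as a map into $K(\mathcal{Q})$. This reduces to Borelness of the coding operation $I(X, \vec{S})$ and of the fan operation $F$. The former requires a Borel choice of a countable dense enumeration of each relevant closed subspace, available from the Kuratowski--Ryll-Nardzewski selection theorem applied in the Effros Borel space, while the latter is immediate from the explicit metric formula $d_F$ displayed in Section~4. These checks are tedious but routine, and no conceptual difficulty arises beyond the path-connectedness observation already highlighted.
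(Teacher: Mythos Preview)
Your proposal is correct and follows essentially the same route as the paper: combine Theorems~\ref{thm:Z} and~\ref{thm:J}, check Borelness of the coding map, and observe that the resulting $J(X,\vec{S})$ is a path-connected continuum. The only difference is cosmetic: the paper simply remarks that $J(X,\vec{S})$ is path-connected whenever $X$ is path-connected and the coded subsets are nonempty, whereas you explicitly force nonemptiness by appending $X$ to the sequence $\vec{S}$---a harmless and arguably tidier variant of the same idea.
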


\end{document}